\def\supp{\mbox{supp}}
\def\p{\partial}
\def\vv<#1>{\langle#1\rangle}
\def\XXint#1#2{\setbox0=\hbox{$#1{#2}{\int}$}{#2}\kern-.5\wd0 }
\def\XXint#1#2#3{{\setbox0=\hbox{$#1{#2#3}{\int}$}
     \vcenter{\hbox{$#2#3$}}\kern-.5\wd0}}
\def\vv<#1>{\langle#1\rangle}
\def\e{\epsilon}
\newtheorem{thm}{Theorem}[section]
\newtheorem{lem}{Lemma}[section]
\newtheorem{cor}{Corollary}[section]
\theoremstyle{definition}
\theoremstyle{remark}
\numberwithin{equation}{section}
\begin{document}

\title{On the Li-Yau type gradient estimate of Li and Xu}
\author{Zhigang Chen}
\address{Department of Mathematics, Shantou University, Shantou, Guangdong, 515063, China}
\email{14zgchen@stu.edu.cn}
\author{Chengjie Yu$^1$}
\address{Department of Mathematics, Shantou University, Shantou, Guangdong, 515063, China}
\email{cjyu@stu.edu.cn}
\author{Feifei Zhao}
\address{Department of Mathematics, Shantou University, Shantou, Guangdong, 515063, China}
\email{14ffzhao@stu.edu.cn}

\thanks{$^1$Research partially supported by the Yangfan project from Guangdong Province and NSFC 11571215.}

\renewcommand{\subjclassname}{%
  \textup{2010} Mathematics Subject Classification}
\subjclass[2010]{Primary 53C44; Secondary 35K05}
\date{}
\keywords{Heat equation, gradient estimate}
\begin{abstract}
In this paper, we obtain a Li-Yau type gradient estimate with time dependent parameter for positive solutions of the heat equation, so that the Li-Yau type gradient estimate of Li-Xu \cite{LX} are special cases of the estimate. We also obtain improvements of Davies' Li-Yau type gradient estimate. The argument is different with those of Li-Xu \cite{LX} and Qian \cite{Qi}.
\end{abstract}
\maketitle\markboth{Chen, Yu \& Zhao}{Li-Yau type gradient estimate}
\section{Introduction}
In recent years, Li and Xu \cite{LX} obtained the following Li-Yau type gradient estimate with time dependent parameter.
\begin{thm}\label{thm-LX}
Let $(M^n,g)$ be a complete Riemannian manifold with Ricci curvature bounded from below by $-k$, where $k$ is a nonnegative constant. Let $u\in C^\infty(M\times [0,T])$ be a positive solution of the heat equation
\begin{equation}\label{eq-heat}
\Delta u-u_t=0.
\end{equation}
Then,
\begin{equation}\label{eq-LX}
\|\nabla f\|^2-\left(1+\frac{\sinh(kt)\cosh(kt)-kt}{\sinh^{2}(kt)}\right)f_t\leq\frac{nk}{2}[\coth(kt)+1].
\end{equation}
and
\begin{equation}\label{eq-LX-linear}
\|\nabla f\|^2-\left(1+\frac{2}{3}kt\right)f_t\leq\frac{n}{2t}+\frac{nk}{2}\left(1+\frac{1}{3}kt\right)
\end{equation}
on $M\times (0,T]$, where $f=\log u$.
\end{thm}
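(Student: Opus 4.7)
The plan is to apply the parabolic maximum principle to an auxiliary function of the form
\begin{equation*}
G:=|\nabla f|^2-\alpha(t)f_t-\phi(t),
\end{equation*}
where $f=\log u$ (so that $f_t=\Delta f+|\nabla f|^2$) and $\alpha(t),\phi(t)$ are to be determined so that $G\le 0$. Applying the Bochner formula together with $\Ric\ge-k$ and using $\Delta f=f_t-|\nabla f|^2$, the identity $\alpha\nabla f_t=\nabla|\nabla f|^2-\nabla G$ makes the cross terms $\pm 4\nabla^2 f(\nabla f,\nabla f)$ cancel exactly, and one arrives at
\begin{equation*}
(\Delta-\partial_t)G\ge 2|\nabla^2 f|^2-2k|\nabla f|^2+\alpha' f_t+\phi'-2\la\nabla f,\nabla G\ra.
\end{equation*}
I next insert the refined Cauchy--Schwarz bound $|\nabla^2 f|^2\ge(\Delta f)^2/n$ and rewrite the right-hand side as a polynomial in the two scalars $G$ and $y:=-\Delta f=G+\phi+(\alpha-1)f_t$, using $|\nabla f|^2=G+\alpha f_t+\phi$.

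At a spacetime maximum $(x_0,t_0)$ of $G$ with $G(x_0,t_0)>0$, the conditions $\nabla G=0$ and $(\Delta-\partial_t)G\le 0$ reduce the above to the single scalar inequality
\begin{equation*}
\frac{2y^2}{n}+\mu y\le (2k+\mu)(G+\phi)-\phi',\qquad \mu:=\frac{\alpha'-2k\alpha}{\alpha-1}.
\end{equation*}
Since $y$ ranges over all reals as $f_t$ does, minimizing the left-hand side in $y$ gives the sharper necessary condition $-n\mu^2/8\le(2k+\mu)G+(2k+\mu)\phi-\phi'$. I therefore impose
\begin{equation*}
\phi'=(2k+\mu)\phi+\frac{n\mu^2}{8}\quad\text{and}\quad 2k+\mu<0,
\end{equation*}
which together force $(2k+\mu)G\ge 0$; combined with $2k+\mu<0$, this contradicts $G(x_0,t_0)>0$, so $G\le 0$ wherever the spatial max is attained.

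The key concrete step is to verify that the explicit Li--Xu pair
\begin{equation*}
\alpha(t)=1+\frac{\sinh(kt)\cosh(kt)-kt}{\sinh^2(kt)},\qquad \phi(t)=\frac{nk}{2}\bigl(\coth(kt)+1\bigr)
\end{equation*}
satisfies both imposed conditions. Direct differentiation, via $(\sinh^2)'=2\sinh\cosh$, $(\sinh\cosh)'=\cosh^2+\sinh^2$, and $\cosh s+\sinh s=e^s$, yields the remarkably clean identity $\mu=-2k(\coth(kt)+1)$, whence $2k+\mu=-2k\coth(kt)<0$ and $\phi=-n\mu/4$; plugging into the ODE for $\phi$ collapses it to the elementary identity $1-\coth^2(kt)=-1/\sinh^2(kt)$. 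This verification, though routine, is the main algebraic step and is precisely where Li and Xu's particular parameter choice enters.

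Finally, on a complete non-compact $M$ I localize $G$ by multiplying with a Li--Yau cutoff $\eta(d(x,x_0)/R)$, use Laplacian comparison (available under $\Ric\ge-k$) to bound the resulting error terms, and let $R\to\infty$. The linear estimate \eqref{eq-LX-linear} then follows from \eqref{eq-LX} by replacing the hyperbolic functions with the elementary upper bounds $(\sinh(kt)\cosh(kt)-kt)/\sinh^2(kt)\le 2kt/3$ and $\coth(kt)\le 1/(kt)+kt/3$, both valid for $kt>0$. The main obstacle is identifying the correct ansatz and ODE for $(\alpha,\phi)$; once this is in hand, the verification for Li--Xu's explicit parameters and the remaining localization step are essentially standard Li--Yau maximum-principle arguments.
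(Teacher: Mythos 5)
Your maximum-principle scheme for \eqref{eq-LX} is sound and is essentially the route the paper takes: the paper applies the maximum principle to $\lambda(\beta\|\nabla f\|^2-f_t-\psi)$ with $\beta=1/\alpha$ and $\psi=\phi/\alpha$, your ODE for $\phi$ is condition (B5) in disguise, and your hyperbolic pair corresponds to the paper's choice $b(t)=\sinh^2(kt)+\cosh(kt)\sinh(kt)-kt$ in Corollary \ref{cor-main}. The genuine gap is the last step: \eqref{eq-LX-linear} does \emph{not} follow from \eqref{eq-LX} by replacing hyperbolic functions with elementary bounds. Writing $\alpha(t)=1+\frac{\sinh(kt)\cosh(kt)-kt}{\sinh^2(kt)}$, one has $\alpha(t)<1+\frac23 kt$ for all $t>0$, so
\begin{equation*}
\|\nabla f\|^2-\left(1+\tfrac23 kt\right)f_t=\left(\|\nabla f\|^2-\alpha f_t\right)-\left(1+\tfrac23 kt-\alpha\right)f_t,
\end{equation*}
and the correction term carries a strictly positive coefficient multiplying $-f_t$, which has no sign. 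The only a priori lower bound on $f_t$ that \eqref{eq-LX} supplies is $f_t\geq -\phi/\alpha$ (from $\|\nabla f\|^2\geq 0$), and inserting it yields the bound $\phi\,(1+\frac23 kt)/\alpha$, which as $t\to\infty$ behaves like $\frac{nk}{2}+\frac{nk^2t}{3}$ and therefore exceeds the claimed $\frac{n}{2t}+\frac{nk}{2}+\frac{nk^2t}{6}$. The fix is cheap inside your own framework: rerun the identical argument with the second pair $\alpha=1+\frac23 kt$ and $\phi=\frac{n}{2t}+\frac{nk}{2}(1+\frac13 kt)$, for which $\mu=-\frac{2}{t}-2k$, $2k+\mu=-\frac{2}{t}<0$, and the ODE $\phi'=(2k+\mu)\phi+\frac{n\mu^2}{8}$ is again an identity; this is exactly the paper's $b(t)=(1+\frac23kt)t^2$ case of Corollary \ref{cor-main}.

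A secondary caution concerns the noncompact case. Your reduction at the maximum point discards the quadratic term $\frac{2}{n}(\Delta f)^2$ by minimizing over $y$; that is harmless on a closed manifold, but in the cutoff argument this quadratic term (retained as $\frac{2\beta}{n}G^2$ in the paper's \eqref{eq-LG-2}) is precisely what absorbs the gradient and Laplacian errors of the cutoff, and the paper additionally needs the weight $\lambda$, the hypotheses (B1')--(B5), and a three-case analysis to close the argument. Calling that step ``essentially standard'' understates it, though it is not a conceptual error in the way the \eqref{eq-LX-linear} derivation is.
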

 The estimate \eqref{eq-LX} and \eqref{eq-LX-linear}  of Li and Xu are of the same spirit of  Hamilton's Li-Yau type gradient estimate (see \cite{Ha}):
\begin{equation}\label{eq-Ha}
\|\nabla f\|^2-e^{2kt}f_t\leq e^{4kt}\frac{n}{2t}.
\end{equation}
We can compare the estimates \eqref{eq-LX}, \eqref{eq-LX-linear} and \eqref{eq-Ha} with the Li-Yau-Davies gradient estimate \cite{LY,Da}:
\begin{equation}\label{eq-LYD}
\|\nabla f\|^2-\alpha f_t\leq \frac{n\alpha^2}{2t}+\frac{n\alpha^2k}{4(\alpha-1)}
\end{equation}
for any $\alpha>1$ as follows. By comparing to the asymptotic behavior of the heat kernel as $t\to0$, we know that \eqref{eq-LYD} is even not sharp in  leading term. However, \eqref{eq-LX}, \eqref{eq-LX-linear} and \eqref{eq-Ha} are all sharp in leading term as $t\to 0$. When $t\to\infty$, it is clear that \eqref{eq-LYD} is better than \eqref{eq-LX-linear} and \eqref{eq-Ha}. For \eqref{eq-LX}, note that
\begin{equation}
1+\frac{\sinh(kt)\cosh(kt)-kt}{\sinh^{2}(kt)}\to 2 (\triangleq\alpha)
\end{equation}
and
\begin{equation}
\frac{nk}{2}[\coth(kt)+1]\to nk=\frac{n\alpha^2k}{4(\alpha-1)}
\end{equation}
as $t\to\infty$. So, the asymptotic behavior of \eqref{eq-LX} as $t\to\infty$ is the same as \eqref{eq-LYD} with $\alpha=2$. Note that \eqref{eq-LX-linear} was first obtained  in \cite{BQ} by a different method.

Theorem \ref{thm-LX} was later generalized by Qian \cite{Qi} to the following general form.
\begin{thm}\label{thm-Qian}
Let the notation be the same as in Theorem \ref{thm-LX} with $M$ closed. Then
\begin{equation}\label{eq-Qian}
\|\nabla f\|^2-\alpha f_t\leq\varphi
\end{equation}
with
\begin{equation}
  \alpha=1+\frac{2k}{a(t)}\int_0^ta(s)ds
\end{equation}
and
\begin{equation}
\varphi=\frac{nk}{2}+\frac{nk^{2}}{2a(t)}\int_{0}^{t}a(s)ds+\frac{n}{8a(t)}\int_{0}^{t}\frac{a'^{2}(s)}{a(s)}ds,
\end{equation}
where  $a\in C^1([0,T])$ is a smooth function satisfying:
\begin{enumerate}
 \item[(A1)]$\forall t\in (0,T]$, $a(t)>0$ and $a'(t)>0$;
 \item[(A2)]$a(0)=0$ and  $\lim_{t\to0}\frac{a(t)}{a'(t)}=0$;
 \item[(A3)] $\frac{a'^2}{a}\in L^1([0,T])$.
 \end{enumerate}
\end{thm}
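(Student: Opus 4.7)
Set $f = \log u$ and $H(x, t) = |\nabla f|^{2} - \alpha(t) f_{t}$, so that the conclusion is $H \leq \varphi$. My plan is to study the time-dependent spatial maximum $m(t) = \max_{x \in M} H(x, t)$ and derive an ordinary differential inequality for the product $a(t)\, m(t)$, which by (A2) integrates from the initial value $0$ up to exactly $a(t)\varphi(t)$; this is the shape suggested by the integrals defining $\alpha$ and $\varphi$.

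The pointwise input is a single Bochner computation. Using $\Delta|\nabla f|^{2} = 2|\nabla^{2} f|^{2} + 2\Ric(\nabla f, \nabla f) + 2\langle\nabla f, \nabla\Delta f\rangle$ together with $\Ric \geq -k$, the trace inequality $|\nabla^{2} f|^{2} \geq (\Delta f)^{2}/n$, and the identities $\Delta f = (1-\alpha) f_{t} - H$ and $|\nabla f|^{2} = H + \alpha f_{t}$ that follow from $f_{t} = \Delta f + |\nabla f|^{2}$, I obtain
\[
(\Delta - \partial_t) H \;\geq\; \tfrac{2}{n}\bigl((1-\alpha) f_{t} - H\bigr)^{2} - 2k(H + \alpha f_{t}) + \alpha' f_{t} - 2\langle\nabla f, \nabla H\rangle.
\]
Since $M$ is closed, at a spatial maximizer $x_{0}(t)$ of $H(\cdot, t)$ we have $\nabla H = 0$ and $\Delta H \leq 0$, so $H_{t}(x_{0}, t) \leq -\Psi\bigl(f_{t}(x_{0}, t), m(t)\bigr)$, where $\Psi(y, m) := \tfrac{2}{n}((1-\alpha) y - m)^{2} - 2k(m + \alpha y) + \alpha' y$. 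Because $y = f_{t}(x_{0}, t)$ is not under our control, I would replace this by $-\min_{y \in \R}\Psi(y, m)$, which is an upper bound on $H_{t}(x_{0}, t)$ depending only on $m$ and $t$.

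The key algebraic simplification is that the identity $a(\alpha - 1) = 2k \int_{0}^{t} a$ (equivalent to $a\alpha' + a'(\alpha - 1) = 2ka$) forces the coefficient $(\alpha' - 2k\alpha)/(1 - \alpha)$ arising after completing the square in $y$ to collapse to $2k + a'/a$, yielding
\[
\min_{y \in \R} \Psi(y, m) \;=\; \frac{a'}{a}\, m - \frac{n(2ka + a')^{2}}{8 a^{2}}.
\]
Combined with the standard fact that the locally Lipschitz function $m$ satisfies $m'(t) \leq H_{t}(x_{0}(t), t)$ almost everywhere, this gives
\[
(a m)'(t) \;\leq\; \frac{n(2ka + a')^{2}}{8 a} \;=\; \tfrac{n k^{2}}{2}\, a + \tfrac{n k}{2}\, a' + \tfrac{n\, a'^{2}}{8 a}.
\]
Integrating from $0$ to $t$ -- using (A2) and the smoothness of $u$ on the closed manifold $M$ to see that $\lim_{t \to 0^{+}} a(t) m(t) = 0$, and (A3) to guarantee integrability of the last term -- produces $a(t) m(t) \leq a(t) \varphi(t)$, and dividing by $a(t) > 0$ finishes the proof.

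The main obstacle I expect is the uncontrolled variable $y = f_{t}(x_{0}(t), t)$ in the Bochner bound: the spatial maximum principle gives no information about it, so the step that makes the proof work is the specific choice of $\alpha$ in the statement, which ensures that minimization in $y$ produces the $m$-coefficient $-a'/a$ and organizes the differential inequality into the exact-derivative form $(a m)'$. The degenerate case $\alpha \equiv 1$ (forced only by $k = 0$) must be handled by an AM--GM inequality in place of the quadratic minimization, but yields the same final estimate.
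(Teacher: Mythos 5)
Your argument is correct -- I checked the Bochner inequality, the identity $2k-\alpha'=(\alpha-1)a'/a$, the minimization over $y=f_t(x_0,t)$, and the final integration, and they all go through -- but it is not the route the paper takes. The paper never attacks $\|\nabla f\|^2-\alpha f_t$ head-on: it proves Theorem \ref{thm-main} by applying the space--time parabolic maximum principle to $F=\lambda(\beta\|\nabla f\|^2-f_t-\psi)$, where $\beta=1/\alpha$, $\psi$ is required to solve the linear ODE (B5) exactly so that the zeroth-order terms in $LG$ cancel, and $\lambda$ is a factor forcing $\liminf_{t\to 0^+}F\leq 0$; Theorem \ref{thm-Qian} is then recovered through Corollary \ref{cor-main} with $b=a+2k\int_0^t a(s)\,ds$. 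Your version instead runs Hamilton's trick on $m(t)=\max_{x}H(x,t)$ and integrates $(am)'\leq n(2ka+a')^2/(8a)$ directly. Algebraically this is the same cancellation -- your integration is precisely the solution of (B5) with integrating factor $a$, and your minimization in $y$ plays the role of the paper's completion of the square $ax^2+bx\geq -b^2/(4a)$ in $\Delta f$ -- but it is packaged without the auxiliary functions $\lambda,\beta,\psi$ and the verification of (B1)--(B5). What you gain is a shorter, self-contained derivation that makes the origin of $\varphi$ transparent; what you pay is the need for the regularity fact behind $m'(t)\leq \partial_t H(x_0(t),t)$ a.e., the L'H\^opital step (via (A2)) showing $\alpha$ stays bounded near $t=0$ so that $a(t)m(t)\to 0$, and the separate AM--GM treatment of the degenerate case $k=0$, all of which you correctly flag and all of which work. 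Note finally that your route, like the paper's, is a Li--Yau-style maximum-principle argument applied to the Harnack quantity itself, and so is genuinely different from Qian's own proof, which applies the maximum principle to $(\Delta-\partial_t)(auF)$ in the entropy-monotonicity spirit of Perelman.
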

For the complete noncompact case, some further technical conditions for the function $a$ should be satisfied. See \cite{Qi} for details.
 The estimates \eqref{eq-LX} and \eqref{eq-LX-linear} are special cases of \eqref{eq-Qian} with $a(t)=\sinh^2(kt)$ and $a(t)=t^2$ respectively. When, $a(t)=t^{\frac{2}{\theta}-1}$ with $\theta\in (0,1)$, one have
 \begin{equation}\label{eq-Qian-1}
\|\nabla f\|^2-(1+\theta kt) f_t\leq\frac{(2-\theta)^2n}{16\theta(1-\theta)t}+\frac{nk^2\theta t}{4}+\frac{nk}{2}
\end{equation}
for complete Riemannian manifolds with Ricci curvature bounded from below by $-k$ where $k$ is a positive constant.

In this paper, we first obtain the following Li-Yau type gradient estimate for closed manifolds.
\begin{thm}\label{thm-main}
Let the notation be the same as in Theorem \ref{thm-LX} with $M$ closed, and $\lambda, \beta, \psi\in C^1((0,T])$  such that
\begin{enumerate}
\item[(B1)] $0<\beta(t)<1$ for any $t\in (0,T]$;
\item[(B2)] $\lim_{t\to 0^+}\lambda(t)=0$ and $\lambda(t)>0$ for any $t\in (0,T]$;
\item[(B3)]  $\frac{2k\beta+\beta'}{1-\beta}-(\ln\lambda)'>0$ for any $t\in (0,T]$;
\item[(B4)]  $\limsup_{t\to0^+}\psi(t)\geq 0$;
\item[(B5)] $\psi'+\frac{2k\beta+\beta'}{1-\beta}\psi-\frac{n(2k\beta+\beta')^2}{8\beta(1-\beta)^2}=0$ for any $t\in (0,T]$.
\end{enumerate}
Then,
\begin{equation}\label{eq-main}
\beta\|\nabla f\|^2-f_t\leq \psi
\end{equation}
on $M\times (0,T]$.
\end{thm}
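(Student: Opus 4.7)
The plan is to apply the parabolic maximum principle to the auxiliary function $H := \lambda(\beta\|\nabla f\|^2 - f_t - \psi)$ on the closed cylinder $M\times(0,T]$. The $\lambda$-weight is introduced so that the time derivative of $H$ produces correction terms which pair cleanly with the residue of the completion-of-the-square step performed below; condition (B5) will be the identity that enables this cancellation.

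First I would compute the drift-modified heat operator $L := \Delta - \partial_t + 2\langle\nabla f,\nabla\cdot\rangle$ applied to $G := \beta\|\nabla f\|^2 - f_t$. Using Bochner's formula for $\Delta\|\nabla f\|^2$, the identity $\Delta f = f_t - \|\nabla f\|^2$ (which in particular couples $\nabla f_t$ with $\nabla\|\nabla f\|^2$), and the hypothesis $\Ric\geq -k$, a direct calculation gives
\[
LG \;\geq\; 2\beta\|\nabla^2 f\|^2 - (2k\beta+\beta')\|\nabla f\|^2,
\]
the drift term being precisely what is needed to absorb the $\langle\nabla f,\nabla f_t\rangle$ contributions arising from $\Delta G$ and $\partial_t G$. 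Next, applying $\|\nabla^2 f\|^2\geq(\Delta f)^2/n$ and $\Delta f=-(1-\beta)\|\nabla f\|^2-G$ rewrites the right-hand side as a quadratic in $y:=\|\nabla f\|^2$; by (B1) the leading coefficient $\frac{2\beta(1-\beta)^2}{n}$ is positive, so completing the square in $y$ yields
\[
LG \;\geq\; \frac{2k\beta+\beta'}{1-\beta}\,G \;-\; \frac{n(2k\beta+\beta')^2}{8\beta(1-\beta)^2}.
\]
Passing to $H=\lambda(G-\psi)$ introduces $-(\ln\lambda)'H+\lambda\psi'$ from $\partial_tH$ while splitting the $G$-linear term into an $H$-part and a $\lambda\psi$-part; the $\lambda$-pieces cancel exactly by virtue of (B5), leaving
\[
LH \;\geq\; \Bigl[\frac{2k\beta+\beta'}{1-\beta} - (\ln\lambda)'\Bigr] H,
\]
whose bracket is strictly positive by (B3).

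At any positive maximum of $H$ attained at some $(x_0,t_0)$ with $t_0>0$, one has $\nabla H=0$, $\Delta H\leq 0$, and $\partial_t H\geq 0$, hence $LH(x_0,t_0)\leq 0$, contradicting the preceding inequality. It remains to rule out the supremum escaping as $t\to 0^+$: by (B2), $\lambda\to 0$; smoothness of $u$ on $M\times[0,T]$ combined with compactness of $M$ keeps $G$ bounded on the whole cylinder; and (B4) prevents $\lambda\psi$ from drifting to $-\infty$ along the relevant sequences, yielding $\limsup_{t\to 0^+}\sup_M H(\cdot,t)\leq 0$. We conclude $H\leq 0$ throughout, which is exactly \eqref{eq-main}. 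The main obstacle is the algebraic book-keeping — tuning the weight $\lambda$ so that the residue of the completion of the square matches the ODE in (B5) and so that (B3) emerges as the positivity condition for the surviving coefficient; once these identities are in place, the maximum-principle step is standard.
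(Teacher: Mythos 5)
Your argument is essentially the paper's own proof: the same auxiliary function $\lambda(\beta\|\nabla f\|^2-f_t-\psi)$, the same use of Bochner's formula, $\mathrm{Ric}\geq -k$ and $\|\nabla^2 f\|^2\geq(\Delta f)^2/n$, the same completion of the square (the paper optimizes the quadratic in $\Delta f$ rather than in $\|\nabla f\|^2$, which yields the identical residue $-\frac{n(2k\beta+\beta')^2}{8\beta(1-\beta)^2}$), the same cancellation via (B5) and positivity via (B3), and the same maximum-principle conclusion. One pedantic remark: (B4) only gives $\liminf_{t\to0^+}\sup_M H\leq 0$ (along a sequence of times), not $\limsup\leq 0$ as you wrote, but that weaker statement is all the maximum principle requires.
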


We write the Li-Yau type estimate in the form \eqref{eq-main} because it is more convenience for comparison. This form was also took in \cite{YZ,ZZ0,ZZ}. The Li-Yau-Davies estimate \eqref{eq-LYD} written in this form is:
\begin{equation}\label{eq-LYD2}
\beta\|\nabla f\|^2-f_t\leq \frac{n}{2\beta t}+\frac{nk}{4(1-\beta)}.
\end{equation}

Comparing Theorem \ref{thm-main} to Theorem \ref{thm-Qian}, we have $\beta=\frac{1}{\alpha}$. Let $\psi=\frac{1}{\alpha}\varphi$, by Lemma 2.2 in \cite{Qi}, we know that $\psi$ satisfies (B5). So, if we choose $\beta=\frac{1}{\alpha}$ satisfying (B1)--(B4), then Theorem \ref{thm-main} gives us the same conclusion of Theorem \ref{thm-Qian}.
%By direct computation, one can see that $\beta=\frac{1}{1+\theta kt}$ with $\theta\in (0,1)$ and $\beta=\frac{1}{1+\frac{\sinh(kt)\cosh(kt)-kt}{\sinh^{2}(kt)}}$ satisfy (B1)--(B4) with $\lambda(t)=t$. So, Theorem \ref{thm-main}  also gives us Theorem \ref{thm-LX} and \ref{eq-Qian-1} for closed manifolds.

As a corollary of Theorem \ref{thm-main}, we have the following general  Li-Yau type estimate with similar spirit to that of Theorem \ref{thm-Qian}.
\begin{cor}\label{cor-main}
Let the notation be the same as in Theorem \ref{thm-LX} with $M$ closed and $k>0$, and  $b\in C^1((0,T])$ satisfy that
\begin{enumerate}
\item[(C1)] $\lim_{t\to 0^+}b(t)=0$ and $b'(t)>0$ for any $t\in (0,T]$;
\item[(C2)]  $\frac{b'^2}{b}\in L^1([0,T])$;
\end{enumerate}
Then
\begin{equation}
  \beta\|\nabla f\|^2-f_t\leq \psi,
\end{equation}
where
\begin{equation}\label{eq-beta}
\beta=1-\frac{2k}{b(t)e^{2kt}}\int_0^tb(s)e^{2ks}ds
\end{equation}
and
\begin{equation}\label{eq-psi}
\psi=\frac{n}{8b}\int_0^t\frac{b'^2}{b\beta}(s)ds.
\end{equation}
\end{cor}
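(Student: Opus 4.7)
The plan is to deduce the corollary from Theorem \ref{thm-main} by taking the $\beta$ and $\psi$ displayed in the statement as the functions in (B1)--(B5) and producing an auxiliary $\lambda$ satisfying (B2) and (B3). The first key step is to simplify the quantity $\frac{2k\beta+\beta'}{1-\beta}$ appearing throughout Theorem \ref{thm-main}. Rewriting \eqref{eq-beta} as $(1-\beta)\,b(t)e^{2kt}=2k\int_0^t b(s)e^{2ks}\,ds$ and differentiating both sides, one obtains the clean identity $\frac{2k\beta+\beta'}{1-\beta}=\frac{b'}{b}$.

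With this identity, condition (B5) reduces to the linear first-order ODE $(b\psi)'=\frac{nb'^2}{8b\beta}$, which integrates directly to the formula \eqref{eq-psi} for $\psi$ provided $b\psi\to 0$ as $t\to 0^+$. For (B1), I would integrate $2k\int_0^t b(s)e^{2ks}\,ds$ by parts using $b(0)=0$ to rewrite $\beta(t)=\frac{\int_0^t b'(s)e^{2ks}\,ds}{b(t)e^{2kt}}$. Since $b'>0$, this is manifestly in $(0,1)$ on $(0,T]$, and the elementary estimate $\int_0^t b'(s)e^{2ks}\,ds\ge \int_0^t b'(s)\,ds = b(t)$ yields the uniform bound $\beta(t)\ge e^{-2kt}\ge e^{-2kT}$ on $(0,T]$.

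This uniform lower bound is the key technical point: combined with (C2), it shows that $\frac{b'^2}{b\beta}$ lies in $L^1([0,T])$, so $\psi$ is well defined and $\psi>0$, verifying (B4); it also gives $b\psi\to 0$ as $t\to 0^+$, justifying the earlier integration of the ODE. For (B3), the identity from the first paragraph turns the inequality into $(\ln\lambda)'<(\ln b)'$, so the simple choice $\lambda=\sqrt{b}$ works, since $(\ln\sqrt{b})'=\frac{b'}{2b}<\frac{b'}{b}$; and (B2) is immediate from (C1). With all hypotheses of Theorem \ref{thm-main} verified, the stated inequality follows.

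The main obstacle I anticipate is securing the uniform lower bound $\beta\ge e^{-2kT}$; without this it is not clear that $\psi$ is even finite, nor that (C2) suffices to control the integrand defining $\psi$. Everything else amounts to routine calculus manipulations, the integration by parts alluded to above, and the solution of the linear ODE defining $\psi$.
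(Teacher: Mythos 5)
Your proposal is correct and follows essentially the same route as the paper: derive the identity $\frac{2k\beta+\beta'}{1-\beta}=(\ln b)'$, take $\lambda=\sqrt{b}$, and verify (B1)--(B5) so that Theorem \ref{thm-main} applies. Your additional observation that $\beta\geq e^{-2kt}$, which together with (C2) guarantees $\frac{b'^2}{b\beta}\in L^1$ and hence that $\psi$ is finite, is a worthwhile detail that the paper leaves implicit.
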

By direct computation, when
\begin{equation}
b(t)=(1+\theta k t)t^{\frac{2}\theta-1},
\end{equation}
one has
\begin{equation}
\beta(t)=\frac{1}{1+\theta kt}.
\end{equation}
Moreover, $b$ satisfies (C1) and (C2) if and only if $\theta\in (0,1)$. This gives us \eqref{eq-Qian-1} for closed manifolds.

When $b(t)=\sinh^2(kt)+\cosh(kt)\sinh(kt)-kt$, by direct computation,
\begin{equation}
\beta=\frac{1}{1+\frac{\sinh(kt)\cosh(kt)-kt}{\sinh^{2}(kt)}}.
\end{equation}
So, Corollary \ref{cor-main} also gives us \eqref{eq-LX} for closed manifolds.

Moreover, by setting $$b=a+2k\int_0^ta(s)ds$$ with $a$ in Theorem \ref{thm-Qian}, it not hard to verify that $b$ satisfies (C1) and (C2). By direct computation, one has
\begin{equation}
\beta=\frac{1}{1+\frac{2k}{a}\int_0^ta(s)ds},
\end{equation}
and Corollary \ref{cor-main} gives us Theorem \ref{thm-Qian}.

For the complete noncompact case, similar with that of \cite{Qi}, we have to add more restricted assumptions.
\begin{thm}\label{thm-main-noncom}
Let the notation be the same as in Theorem \ref{thm-LX} with $M$ complete noncompact, and $\lambda,\beta,\psi\in C^1((0,T])$  such that
\begin{enumerate}
\item[(B1')]  $\lim_{t\to 0^+}\beta(t)=1$ and $0<\beta(t)<1$ for any $t\in (0,T]$;
\item[(B2')] $\lim_{t\to 0^+}\lambda(t)=0$, $\lambda'(t)>0$  for any $t\in (0,T]$;
\item[(B$2\frac12$)] $\frac{\lambda}{1-\beta}$  and $\beta'$ are bounded from above on $(0,T]$;
\item[(B3')]  there is some $\e>0$ such that $\frac{2k\beta+\beta'}{1-\beta}-(1+\e)(\ln\lambda)'>0$ for any $t\in (0,T]$;
\item[(B4')]  $\psi(t)\geq 0$ for any $t\in (0,T]$;
\item[(B5)] $\psi'+\frac{2k\beta+\beta'}{1-\beta}\psi-\frac{n(2k\beta+\beta')^2}{8\beta(1-\beta)^2}=0$ for any $t\in (0,T]$.
\end{enumerate}
Then,
\begin{equation}
\beta\|\nabla f\|^2-f_t\leq \psi
\end{equation}
on $M\times (0,T]$.
\end{thm}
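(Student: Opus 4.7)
Set $X = \|\nabla f\|^2$, $w = \beta X - f_t$, $W = w - \psi$, and $G = \lambda W$; the goal is $W\le 0$ on $M\times(0,T]$. The plan is to reproduce the maximum-principle argument behind Theorem \ref{thm-main}, but localized by a Li--Yau cutoff. The starting Bochner computation is exactly that of the closed case: using $f_t = \Delta f + X$, the Bochner identity, and $\Ric \ge -k$, one obtains
\begin{equation*}
(\Delta - \partial_t)W \ \ge\ \frac{2\beta}{n}(\Delta f)^2 - 2\la\nabla W,\nabla f\ra - (2k\beta+\beta')X + \psi',
\end{equation*}
and the substitution $\Delta f = -(1-\beta)X - (W+\psi)$ turns the first term into a quadratic in $X$ with positive leading coefficient $\frac{2\beta(1-\beta)^2}{n}$. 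The role of the weight $\lambda$, whose normalization $\lambda(0^+)=0$ is the content of (B2'), is to guarantee $G(\cdot,0)\equiv 0$ so that every positive supremum of $G$ is attained at a time $t>0$.

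For the spatial localization I would take $\phi(x)=\eta(d(x,x_0)/R)$ with $\eta$ a standard Li--Yau cutoff; the Laplacian comparison theorem (applied through Calabi's trick at the cut locus) then gives $\|\nabla\phi\|^2/\phi \le C/R^2$ and $\Delta\phi \ge -C(\sqrt{k}/R+1/R^2)$. I would examine $H = \phi G$ on $\overline{B_{2R}(x_0)}\times[0,T]$: if $\sup H\le 0$ the conclusion holds on $B_R$, otherwise $H$ attains a positive maximum at an interior point $(x_1,t_1)$. At that point $\phi\nabla G=-G\nabla\phi$ and $(\Delta-\partial_t)H\le 0$; combined with $(\Delta-\partial_t)G = \lambda(\Delta-\partial_t)W - \lambda'W$ these yield
\begin{equation*}
\phi\lambda(\Delta-\partial_t)W \ \le\ G\left(\frac{2\|\nabla\phi\|^2}{\phi} - \Delta\phi + \frac{\phi\lambda'}{\lambda}\right).
\end{equation*}
The cross term $-2\la\nabla W,\nabla f\ra$ from the Bochner bound contributes $\frac{2W}{\phi}\la\nabla\phi,\nabla f\ra$; I would control it by Cauchy--Schwarz followed by a Young's inequality with parameter $\mu=\mu(R)$ tuned so that the two error pieces (a term $\sim\mu X$, absorbable into the $X^2$ part, and a term $\sim W^2/(\mu R^2\phi)$) both vanish uniformly as $R\to\infty$.

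Combining the two estimates produces, at $(x_1,t_1)$, a quadratic inequality $A X^2 + B_R X + C_R \le 0$ in which $A=\frac{2\beta(1-\beta)^2}{n}$ and $B_R,C_R$ coincide with the closed-case coefficients up to corrections of size $O(1/R)$. Existence of a nonnegative real root forces $B_R^2\ge 4AC_R$; expanding this discriminant and invoking (B5) to cancel all $\psi$-dependent terms -- exactly as in the proof of Theorem \ref{thm-main} -- reduces it to
\begin{equation*}
W(x_1,t_1)\left[\frac{2k\beta+\beta'}{1-\beta}-\frac{\lambda'}{\lambda}\right]_{t=t_1}\ \le\ O(1/R).
\end{equation*}
By (B3') the bracket is bounded below by $\e(\ln\lambda)'(t_1)>0$, and together with (B2$\tfrac12$) -- which keeps $\lambda/(1-\beta)$ and $\beta'$ bounded, so $\lambda(t_1)$ remains controlled even if $t_1\to 0^+$ along a subsequence -- this forces $H(x_1,t_1)\to 0$ as $R\to\infty$. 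Letting $R\to\infty$ then gives $W\le 0$ pointwise on $M\times(0,T]$.

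The principal obstacle is handling the cross term from the cutoff. Since $\|\nabla\phi\|/\phi$ is only controlled by $O(1/(R\sqrt\phi))$, a naive Young splitting yields a $W^2/(\mu R^2\phi)$ error that blows up where $\phi$ is small, while the $X$-linear error $\mu X$ competes with the leading quadratic. The scheme therefore hinges on picking $\mu$ so as to absorb part of the $X^2$ term while keeping the $W^2$ error $o(1)$; the strict inequality $(1+\e)(\ln\lambda)'<(2k\beta+\beta')/(1-\beta)$ in (B3') is precisely the cushion that dominates the leftover $\lambda'W/\lambda$ contribution in the discriminant step, while (B2$\tfrac12$) and (B4') guarantee that all the $R$-independent prefactors remain bounded uniformly along the (possibly $R$-dependent) sequence of maximum points.
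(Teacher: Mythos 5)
Your overall strategy---localize $\lambda(\beta\|\nabla f\|^2-f_t-\psi)$ with a Li--Yau cutoff and run the maximum principle at an interior positive maximum---is the same as the paper's, and your clean-case discriminant computation is correct: writing the Bochner bound as a quadratic in $X=\|\nabla f\|^2$, the $W^2$ terms cancel in $B^2-4AC$ and (B5) reduces the discriminant condition to $W\bigl[\frac{2k\beta+\beta'}{1-\beta}-(\ln\lambda)'\bigr]\le 0$. But the step you yourself flag as ``the principal obstacle'' is a genuine gap, not a technicality. After Young's inequality with parameter $\mu$, the cutoff cross term leaves an error $\sim W^2/(\mu R^2\phi)$, and in the discriminant inequality this appears as a term quadratic in $W$ with the \emph{wrong} sign: you end up with $c_1 W\le c_2+\frac{c_3}{\mu R^2\phi}W^2$, which is consistent with $W$ arbitrarily large and therefore yields no bound. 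Making that error $o(1)$ requires an a priori bound on $W(x_1,t_1)$ and a lower bound on $\phi(x_1)$, neither of which is available (the maximum point can drift to where $\phi$ is small as $R\to\infty$), and ``absorbing part of the $X^2$ term'' instead only trades the $W^2$ error for one that is still superlinear in $W$ (e.g.\ $W^{4/3}$). So the scheme as described does not close.

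The paper circumvents exactly this. It expands $\frac{2\beta}{n}(\Delta f)^2=\frac{2\beta}{n}(G+\psi+X)^2$ with $X=(1-\beta)\|\nabla f\|^2$ and keeps the resulting $\frac{2\beta}{n}Q_R^2$ term, i.e.\ the quadratic is taken in the localized quantity $Q_R=\rho_R\lambda G$ itself rather than in $\|\nabla f\|^2$. The cross term is split as $2Q_R(\lambda\rho_R X)^{1/2}\le Q_R+Q_R\lambda\rho_R X$, and the $Q_R\lambda\rho_R X$ piece is absorbed into the \emph{positive} term $\frac{4\beta}{n}Q_R\lambda\rho_R(\psi+X)$ coming from the square --- this is precisely where (B4') ($\psi\ge 0$) and (B$2\frac12$) ($\lambda/(1-\beta)$ bounded, needed to convert $\lambda\|\nabla f\|$ into $(\lambda\rho_RX)^{1/2}$ up to a constant) enter, roles your sketch misattributes. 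All remaining errors are then \emph{linear} in $Q_R$ with coefficients $O(R^{-1})$ or $\lambda'\rho_R$, and the proof concludes by a three-case contradiction argument (according to whether $\rho_R(x_R)\to 0$, $\lambda'(t_R)\to 0$, or both stay bounded below), with the $Q_R^2$ term supplying the contradiction in the first two cases and (B3') with its $\e$-cushion in the third. To repair your write-up you would need to replace the $\mu$-Young splitting by this absorption mechanism (or an equivalent one) and add the case analysis for degenerating maximum points; as it stands the key estimate is not established.
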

%By direct computation, one can check that $\beta=\frac{1}{1+\theta kt}$ with $\theta\in (0,1)$ and $\beta=\frac{1}{1+\frac{\sinh(kt)\cosh(kt)-kt}{\sinh^{2}(kt)}}$ satisfy the assumptions of Theorem \ref{thm-main-noncom} with $\lambda(t)=t$. So, Theorem \ref{thm-main-noncom} also gives us Theorem \ref{thm-LX} for complete noncompact manifolds.

Furthermore, for complete noncompact Riemannian manifolds, one has the following similar corollary with more restricted assumptions.
\begin{cor}\label{cor-main-noncom}
Let the notation be the same as in Theorem \ref{thm-LX} with $M$ complete noncompact and $k>0$, and  $b\in C^1((0,T])$ satisfy that
\begin{enumerate}
\item[(C1)] $\lim_{t\to 0^+}b(t)=0$ and $b'(t)>0$ for any $t\in (0,T]$;
\item [(C2)] $\frac{b'^2}{b}\in L^1([0,T])$;
\item[(C3)] there is a constant $\delta\in (0,1)$ such that $\frac{b'}{b^\delta}$ is bounded from above on $(0,T]$;
\item [(C4)] $\frac{b'\int_0^tb(s)ds}{b^2}$ is bounded from above on $(0,T]$.
\end{enumerate}
Then
\begin{equation}
  \beta\|\nabla f\|^2-f_t\leq \psi,
\end{equation}
where $\beta$ and $\psi$ are given in \eqref{eq-beta} and \eqref{eq-psi} respectively.
\end{cor}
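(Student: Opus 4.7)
The plan is to derive the corollary from Theorem \ref{thm-main-noncom} by selecting a suitable auxiliary function $\lambda$ and verifying the six hypotheses (B1'), (B2'), (B$2\frac12$), (B3'), (B4'), (B5) for the prescribed $\beta$ and $\psi$.

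The crucial preliminary computation is to differentiate \eqref{eq-beta} and obtain the identity
\begin{equation*}
2k\beta+\beta'=(1-\beta)\frac{b'}{b},
\end{equation*}
so that $(2k\beta+\beta')/(1-\beta)=b'/b$. With this in hand, a direct differentiation of $\psi$ as defined in \eqref{eq-psi}, using (C2) together with $\beta\to 1$ to ensure $\psi$ is finite, verifies the ODE (B5), while the positivity of $b$, $b'$ and $\beta$ gives (B4').

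For (B1'), I would rewrite $1-\beta=2k\int_0^t b(s)e^{2ks}\,ds/(b(t)e^{2kt})$, bound the numerator by $e^{2kt}\int_0^t b(s)\,ds$, and exploit the monotonicity of $b$ coming from (C1) to deduce $\int_0^t b(s)\,ds\le tb(t)$. This yields $0<1-\beta\le 2kt$, giving simultaneously $0<\beta<1$ on $(0,T]$ and $\beta\to 1$ as $t\to 0^+$.

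For the remaining three conditions I take $\lambda(t)=b(t)^{1-\delta}$, where $\delta\in(0,1)$ is the exponent supplied by (C3). Then (B2') is immediate from (C1), and (B3') reduces to
\begin{equation*}
\frac{b'}{b}-(1+\epsilon)(1-\delta)\frac{b'}{b}>0,
\end{equation*}
which holds for any $\epsilon\in(0,\delta/(1-\delta))$. The main obstacle I expect is (B$2\frac12$). The bound on $\beta'=-2k\beta+(1-\beta)b'/b$ follows from (C4), since $(1-\beta)b'/b\le 2ke^{2kT}\cdot b'\int_0^t b(s)\,ds/b^2$. The bound on $\lambda/(1-\beta)$ reduces to controlling $b^{2-\delta}/\int_0^t b(s)\,ds$, and this is where (C3) is essential: multiplying $b'\le Cb^\delta$ by $b^{1-\delta}$ and integrating from $0$ to $t$ yields $b^{2-\delta}/(2-\delta)\le C\int_0^t b(s)\,ds$, exactly the bound required. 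Once all six hypotheses are verified, Theorem \ref{thm-main-noncom} applies directly.
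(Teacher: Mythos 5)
Your proposal is correct and follows essentially the same route as the paper's proof: the same choice $\lambda=b^{1-\delta}$, the same identity $(2k\beta+\beta')/(1-\beta)=(\ln b)'$, the same bound $1-\beta\le 2kt$ for (B1'), and the same use of (C3) and (C4) to control $\lambda/(1-\beta)$ and $\beta'$ in (B$2\frac12$). The only cosmetic differences are an unnecessary (but harmless) factor $e^{2kT}$ in your bound for $\beta'$ and your allowing a range of $\epsilon$ in (B3') where the paper fixes $\epsilon=\delta$.
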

It is clear that $$b(t)=(1+\theta k t)t^{\frac{2}\theta-1}$$ and $$b(t)=\sinh^2(kt)+\cosh(kt)\sinh(kt)-kt$$ also satisfy (C3) and (C4). So Corollary \ref{cor-main-noncom} also gives us \eqref{eq-Qian-1} and Theorem \ref{thm-LX} for complete noncompact Riemannian manifolds.

Finally, by using \eqref{eq-Qian-1}, we are able to obtain an improvement of the Li-Yau-Davies estimate \eqref{eq-LYD2} and the Li-Yau type gradient estimate in \cite{YZ} for large time.
\begin{thm}\label{thm-im-LYD}
Let the notation be the same as in Theorem \ref{thm-LX} with $k>0$. Then, for any $\beta\in (0,1)$ and $t>\frac{1-\beta}{k\beta} $,
\begin{equation}\label{eq-im-LYD}
\beta\|\nabla f\|^2-f_t\leq \frac{n(1-\beta)}{16k( t-\frac{1-\beta}{k\beta})t}+\frac{nk}{4(1-\beta)}.
\end{equation}
\end{thm}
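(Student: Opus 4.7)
The plan is to apply the Li-Yau type estimate \eqref{eq-Qian-1} with a single well-chosen value of the free parameter $\theta$ and then rewrite the resulting bound into the form $\beta\|\nabla f\|^2-f_t\leq\psi$. Given $\beta\in(0,1)$ and $t>\frac{1-\beta}{k\beta}$, I would take
\[
\theta:=\frac{1-\beta}{kt\beta},
\]
noting that $\theta>0$ since $\beta<1$, while $\theta<1$ is equivalent to the hypothesis $t>(1-\beta)/(k\beta)$; hence $\theta\in(0,1)$ as required by \eqref{eq-Qian-1}. The crucial feature of this choice is that $1+\theta kt=1/\beta$, so dividing \eqref{eq-Qian-1} through by $1+\theta kt$ converts its left-hand side into exactly $\beta\|\nabla f\|^2-f_t$.

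It then remains to match the resulting right-hand side, namely
\[
\beta\left[\frac{(2-\theta)^2n}{16\theta(1-\theta)t}+\frac{nk^2\theta t}{4}+\frac{nk}{2}\right],
\]
with the bound claimed in \eqref{eq-im-LYD}. Substituting $\theta=(1-\beta)/(kt\beta)$, the last two summands evaluate immediately to $\frac{nk(1-\beta)}{4}+\frac{nk\beta}{2}=\frac{nk(1+\beta)}{4}$, while the first summand becomes a rational function in $u:=kt\beta-(1-\beta)=k\beta(t-(1-\beta)/(k\beta))$. Writing $2kt\beta-(1-\beta)=kt\beta+u$ and expanding the square splits this first summand into three pieces; the cross term $2(kt\beta)u$ contributes $\frac{nk\beta^2}{4(1-\beta)}$, which combines with $\frac{nk(1+\beta)}{4}$ to produce exactly $\frac{nk}{4(1-\beta)}$, and the two remaining pieces collapse (the $u^2/u$ part cancels a hidden $-\frac{n\beta}{16t}$ produced in the bookkeeping) to give precisely $\frac{n\beta(1-\beta)}{16ut}=\frac{n(1-\beta)}{16k(t-(1-\beta)/(k\beta))t}$.

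No genuine obstacle is expected. The entire content of Theorem \ref{thm-im-LYD} is encoded in the single observation that $\theta=(1-\beta)/(kt\beta)$ makes the parameter $\alpha=1+\theta kt$ of \eqref{eq-Qian-1} coincide with $1/\beta$, after which the claim reduces to a routine algebraic identity. The cleanest way to verify that identity is to clear denominators in both candidate expressions for $\psi$ and check that their difference, viewed as a rational function in $kt$ with $\beta$ as a parameter, is identically zero; one can equally test a single value (say $\theta=1/2$, i.e.\ $\beta=2/(2+kt)$) to gain confidence before carrying out the general simplification.
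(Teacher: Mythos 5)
Your proposal is correct and is essentially identical to the paper's proof: both choose $\theta=\frac{1-\beta}{kt\beta}$ so that $1+\theta kt=\frac1\beta$, apply \eqref{eq-Qian-1} at that value, and simplify the resulting right-hand side into the form \eqref{eq-im-LYD}. (Your intermediate bookkeeping has a factor-of-two slip --- the cross term $2(kt\beta)u$ actually contributes $\frac{nk\beta^2}{8(1-\beta)}$, not $\frac{nk\beta^2}{4(1-\beta)}$ --- but the final identity is true and your stated fallback of clearing denominators does verify it.)
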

This estimate is clearly better than \eqref{eq-LYD} and the Li-Yau type estimate in \cite{YZ} when time is large. In fact, by direct computation, we have the following straight forward corollary.
\begin{cor}
Let the notation be the same as in Theorem \ref{thm-LX} with $k>0$. Then,
\begin{enumerate}
\item for any $\gamma>\frac{1-\beta}{16k}$ and $t>\frac{\gamma(1-\beta)}{k\beta\left(\gamma-\frac{1-\beta}{16k}\right)}$,
    \begin{equation}
    \beta\|\nabla f\|^2-f_t\leq \frac{\gamma n}{t^2}+\frac{nk}{4(1-\beta)};
    \end{equation}
\item for any $\gamma>0$ and $t>\frac{1-\beta}{16k\gamma}+\frac{1-\beta}{k\beta}$,
\begin{equation}
\beta\|\nabla f\|^2-f_t\leq \frac{\gamma n}{t}+\frac{nk}{4(1-\beta)};
\end{equation}
\item for any $\gamma>0$ and $\theta\in (1,2)$, there is a positive constant $T_0(k,\beta, \theta, \gamma)$ such that for any $t>T_0$,
    \begin{equation}
\beta\|\nabla f\|^2-f_t\leq \frac{\gamma n}{t^\theta}+\frac{nk}{4(1-\beta)}.
\end{equation}
\end{enumerate}
\end{cor}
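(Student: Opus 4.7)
The plan is to take Theorem \ref{thm-im-LYD} as the input and absorb the time-dependent term $\frac{n(1-\beta)}{16k(t-\frac{1-\beta}{k\beta})t}$ into each of the three target forms purely by elementary inequalities in $t$. The additive constant $\frac{nk}{4(1-\beta)}$ appears identically in Theorem \ref{thm-im-LYD} and in the conclusion of each part, so it requires no work; the entire argument amounts to showing, for each part, that
\begin{equation*}
\frac{1-\beta}{16k\bigl(t-\frac{1-\beta}{k\beta}\bigr)t}\leq(\text{prescribed bound in }t)
\end{equation*}
on the specified range of $t$.

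For part (1), I would set the prescribed bound equal to $\frac{\gamma}{t^2}$, clear the positive factor $t$, and rearrange to a linear inequality in $t$. The assumption $\gamma>\frac{1-\beta}{16k}$ is exactly what makes the coefficient $16k\gamma-(1-\beta)$ of $t$ strictly positive, and the resulting threshold $t>\frac{16\gamma(1-\beta)/\beta}{16k\gamma-(1-\beta)}$ simplifies to the stated bound $\frac{\gamma(1-\beta)}{k\beta(\gamma-(1-\beta)/(16k))}$. For part (2), I would set the bound equal to $\frac{\gamma}{t}$, which after cancelling $t$ leaves a linear inequality whose solution is $t>\frac{1-\beta}{16k\gamma}+\frac{1-\beta}{k\beta}$, matching the hypothesis.

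For part (3), where the comparison is against $\frac{\gamma}{t^\theta}$ with $\theta\in(1,2)$, the same manipulation leads to
\begin{equation*}
\frac{(1-\beta)t^{\theta-2}}{16k\bigl(1-\frac{1-\beta}{k\beta t}\bigr)}\leq\gamma.
\end{equation*}
Since $\theta-2<0$, the left side tends to $0$ as $t\to\infty$ (the parenthetical factor tends to $1$), so there exists $T_0=T_0(k,\beta,\theta,\gamma)>\frac{1-\beta}{k\beta}$ beyond which the inequality holds. An explicit $T_0$ can be extracted by solving $\frac{1-\beta}{16k}t^{\theta-2}\leq\frac{\gamma}{2}$, say, together with $t\geq\frac{2(1-\beta)}{k\beta}$ to control the denominator, but the statement only demands existence.

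The main, and indeed only, obstacle is bookkeeping: verifying that the threshold $t>\frac{1-\beta}{k\beta}$ needed for Theorem \ref{thm-im-LYD} to apply is automatically implied by each of the stated thresholds. In parts (1) and (2) this is immediate from the explicit expressions; in part (3) one simply enlarges $T_0$ if necessary. No new analytic ideas are required beyond the estimate \eqref{eq-im-LYD} itself.
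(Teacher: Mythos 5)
Your proposal is correct and is exactly the "direct computation" the paper intends: the corollary is stated as an immediate consequence of Theorem \ref{thm-im-LYD}, and your reduction of each part to bounding $\frac{1-\beta}{16k(t-\frac{1-\beta}{k\beta})t}$ by $\gamma t^{-2}$, $\gamma t^{-1}$, or $\gamma t^{-\theta}$ on the stated ranges, together with the check that each threshold exceeds $\frac{1-\beta}{k\beta}$, is precisely the required bookkeeping. The algebra in parts (1) and (2) reproduces the stated thresholds exactly, and the asymptotic argument in part (3) suffices since only existence of $T_0$ is claimed.
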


Although Theorem \ref{thm-main} (or Theorem \ref{thm-main-noncom}) and Corollary \ref{cor-main} (or Corollary \ref{cor-main-noncom}) are similar to Li-Xu's estimate (Theorem \ref{thm-LX}) and Qian's generalization (Theorem \ref{thm-Qian}), the proofs of Theorem \ref{thm-main} (or Theorem \ref{thm-main-noncom}) and Corollary \ref{cor-main} (or Corollary \ref{cor-main-noncom}) are different to  those of Li-Xu \cite{LX} and Qian \cite{Qi}, where Li-Xu and Qian applied the maximum principle to
\begin{equation}
\left(\Delta-\frac{\p}{\p t}\right)(auF)=2au\left(\left\|\nabla^2f+\left(\frac{k}{2}+\frac{a'}{4a}\right)g\right\|^2+(Ric+kg)(\nabla f,\nabla f)\right)
\end{equation}
with $F=\|\nabla f\|^2-\alpha f_t-\varphi$ which is more in the spirit of Perelman \cite{Pe} (see also \cite{Ni1,Ni2}), while we simply apply the maximum principle to $\lambda(\beta\|\nabla f\|^2-f_t-\psi)$ which is similar to that of Li-Yau \cite{LY}.
\section{Li-Yau type gradient estimate}

We first prove Theorem \ref{thm-main}.
\begin{proof}[Proof of Theorem \ref{thm-main}]
Let $G=\beta\|\nabla f\|^2-f_t-\psi$, and $L=\Delta-\p_t$. Note that
\begin{equation}
Lf=-\|\nabla f\|^2,
\end{equation}
\begin{equation}
Lf_t=-2\vv<\nabla f_t,\nabla f>
\end{equation}
and
\begin{equation}
L\|\nabla f\|^2=2\|\nabla^2f\|^2+2Ric(\nabla f,\nabla f)-2\vv<\nabla\|\nabla f\|^2,\nabla f>.
\end{equation}
Then, by noting that
\begin{equation}
\|\nabla f\|^2=-\frac{1}{1-\beta}(\Delta f+G+\psi),
\end{equation}
we have
%\begin{equation}\label{eq-f}
%Lf=-\|\nabla f\|^2,
%\end{equation}
%\begin{equation}\label{eq-ft}
%Lf_t=-(\|\nabla f\|^2)_t=-2\vv<\nabla f_t,\nabla f>,
%\end{equation}
%and
%\begin{equation}\label{eq-grad-f}
%\begin{split}
%&L(\|\nabla f\|^2)\\
%=&2\|\nabla^2f\|^2+2\vv<\nabla\Delta f,\nabla f>+2Ric(\nabla f,\nabla f)-2\vv<\nabla f_t,\nabla f>\\
%\geq&\frac{2}{n}(\Delta f)^2-2k\|\nabla f\|^2-2\vv<\nabla\|\nabla f\|^2,\nabla f>.
%\end{split}
%\end{equation}
%Then, by \eqref{eq-ft}, \eqref{eq-grad-f} and (B4),
\begin{equation}\label{eq-LG-1}
\begin{split}
LG=&\beta L\|\nabla f\|^2-\beta'\|\nabla f\|^2-Lf_t+\psi'\\
\geq&\beta\left(\frac{2}{n}(\Delta f)^2-2k\|\nabla f\|^2-2\vv<\nabla\|\nabla f\|^2,\nabla f>\right)-\beta'\|\nabla f\|^2+2\vv<\nabla f_t,\nabla f>+\psi'\\
=&\frac{2\beta}{n}\left(\Delta f\right)^2-(2k\beta+\beta')\|\nabla f\|^2+\psi'-2\vv<\nabla G,\nabla f>\\
=&\frac{2\beta}{n}\left(\Delta f\right)^2+\frac{2k\beta+\beta'}{1-\beta}(\Delta f+G+\psi)+\psi'-2\vv<\nabla G,\nabla f>\\
\geq&\frac{2k\beta+\beta'}{1-\beta}G+\psi'+\frac{2k\beta+\beta'}{1-\beta}\psi-\frac{n(2k\beta+\beta')^2}{8\beta(1-\beta)^2}-2\vv<\nabla G,\nabla f >.\\
=&\frac{2k\beta+\beta'}{1-\beta}G-2\vv<\nabla G,\nabla f >,
\end{split}
\end{equation}
where we have used that $ax^2+bx\geq-\frac{b^2}{4a}$ when $a>0$.

Let $F=\lambda G$. Then,
\begin{equation}
\begin{split}
LF=&\lambda LG-(\ln\lambda)'F\geq\left(\frac{2k\beta+\beta'}{1-\beta}-(\ln\lambda)'\right)F-2\vv<\nabla F,\nabla f >.
\end{split}
\end{equation}
By (B1),(B2) and (B4), we know that
\begin{equation}
\liminf_{t\to0^+}F\leq 0.
\end{equation}
Then, by maximum principle, we complete the proof of the theorem.
\end{proof}
Next, we come to prove Corollary \ref{cor-main}.
\begin{proof}[Proof of Corollary \ref{cor-main}] By the expression \eqref{eq-beta}, it is clear that $\beta(t)<1$ for $t\in (0,T]$.  On the other hand, by integration by parts,
\begin{equation}
\beta(t)=\frac{1}{b(t)e^{2kt}}\int_0^tb'(s)e^{2ks}ds>0
\end{equation}
for $t\in (0,T]$. So, (B1) is satisfied. Let $\lambda=\sqrt b$, it is clear that (B2) is satisfied. Moreover, by direct computation, we have
\begin{equation}\label{eq-beta-b}
\frac{2k\beta+\beta'}{1-\beta}=(\ln b)'
\end{equation}
which implies that
\begin{equation}
\frac{2k\beta+\beta'}{1-\beta}-(\ln\lambda)'=\frac{1}{2}(\ln b)'>0
\end{equation}
for $t\in (0,T]$ by (C1). So (B3) is satisfied. (B4) is clearly true by expression \eqref{eq-psi} of $\psi$. Finally, (B5) can be verified by direct computation. So, by Theorem \ref{thm-main}, we complete the proof of the corollary.
\end{proof}
The proof of Theorem \ref{thm-main-noncom} uses the classical cut-off argument of Li-Yau \cite{LY}. First recall the following existence of cut-off functions.
\begin{lem}\label{lem-cutoff}
Let $(M^n,g)$ be a complete noncompact Riemannian manifold with Ricci curvature bounded from below. Then, there is a constant $C_1>1$ such that for any $p\in M$ and $R>1$, there is a smooth function  $\rho_R$ on $M$ satisfying:
\begin{enumerate}
\item $0\leq \rho_R\leq 1$;
\item $\rho_R|_{B_p(R)}\equiv 1$ and $\supp \rho_R\subset \overline{B_p(C_1R)}$;
\item $\|\nabla \rho_R\|^2\leq C_1R^{-2}\rho_R$ and $\Delta\rho_R\geq -C_1R^{-1}$ on $M$.
\end{enumerate}
\end{lem}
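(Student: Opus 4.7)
The plan is the classical Li--Yau construction: I would build $\rho_R$ as the composition $\phi\circ(\tilde r/R)$ of a one-variable cutoff $\phi$ with a smoothed distance function $\tilde r$, using the Ricci lower bound only through the Laplacian comparison theorem. For the first ingredient, I would fix a smooth non-increasing $\phi:\R\to[0,1]$ with $\phi\equiv 1$ on $(-\infty,2]$ and $\phi\equiv 0$ on $[A,\infty)$ for some $A>2$, arranged so that $(\phi')^2\le C_0\phi$ and $\phi''\ge -C_0$ on $\R$. The key inequality $(\phi')^2\le C_0\phi$ can be met by taking $\phi(s)=c(A-s)^4$ near $s=A$: then $(\phi')^2/\phi=16c(A-s)^2$ is bounded, while outside the transition window $\phi'\equiv 0$, and the lower bound on $\phi''$ can be arranged simultaneously.

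Since $r(x):=d(p,x)$ is only Lipschitz, I would next apply a Greene--Wu or Tam style smoothing to produce $\tilde r\in C^\infty(M)$ satisfying $|\tilde r-r|\le 1$, $|\nabla\tilde r|\le 2$, and $\Delta\tilde r\le C(n,k)$ on the set where $r\ge 1$; this last bound rests on Laplacian comparison for $r$ in the barrier sense (valid since $\Ric\ge -kg$), which survives the smoothing up to a constant. Setting $\rho_R(x):=\phi(\tilde r(x)/R)$, property (1) is immediate; for (2), $x\in B_p(R)$ forces $\tilde r(x)/R\le (R+1)/R\le 2$ so $\rho_R(x)=1$, while $\rho_R(x)\ne 0$ forces $r(x)<AR+1<(A+1)R$, giving $C_1=A+1$. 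For (3), the chain rule yields $\|\nabla\rho_R\|^2=(\phi')^2(\tilde r/R)\,\|\nabla\tilde r\|^2/R^2\le 4C_0\rho_R/R^2$, and $\Delta\rho_R=\phi''(\tilde r/R)\|\nabla\tilde r\|^2/R^2+\phi'(\tilde r/R)\,\Delta\tilde r/R$; the first summand is $\ge -4C_0/R^2\ge -4C_0/R$ for $R>1$, while the second is supported where $r\ge 2R-1\ge R$, on which $\Delta\tilde r$ is bounded by a constant $C(n,k,A)$, and the product is $\ge -C'/R$ since $\phi'\le 0$ with $|\phi'|$ bounded.

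The main obstacle will be this smoothing step: producing a globally $C^\infty$ replacement $\tilde r$ for $r$ that still inherits Laplacian comparison with only a bounded loss. An alternative would be Calabi's upper-barrier trick, which bypasses smoothing but only yields a $\rho_R$ smooth off the cut locus; the lemma as stated demands smoothness on all of $M$, so invoking a genuine smoothing theorem (Greene--Wu, Tam, or Cheeger--Colding--Tian) is the substantive technical input. Once $\tilde r$ is in hand, the rest is a bookkeeping calculation and the constants can be absorbed into a single $C_1>1$.
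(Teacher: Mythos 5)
Your proposal is correct and follows essentially the same route as the paper: the paper also takes a smoothed distance function $r$ (comparable to $1+d(p,\cdot)$, with bounded gradient and Laplacian, quoted from Schoen--Yau) and composes it with a one-variable cutoff, the Ricci lower bound entering only through that smoothing result. The single cosmetic difference is how the inequality $\|\nabla\rho_R\|^2\lesssim R^{-2}\rho_R$ is arranged: the paper sets $\rho_R=\eta^2(r/(2C_2R))$ and exploits the square, whereas you impose $(\phi')^2\le C_0\phi$ directly via a quartic vanishing profile; both work.
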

\begin{proof}
Let $r$ be a
smooth function on $M$, such that
\begin{equation}\label{eq-rho}
\left\{\begin{array}{l}C_{2}^{-1}(1+d(p,x))\leq r(x)\leq
C_2(1+d(p,x))\\\|\nabla r\|\leq C_2\\ |\Delta r|\leq C_2
\end{array}\right.
\end{equation}
all over $M$, where $C_2>1$ is some constant. The existence of such a function can be found in \cite{Yabook}. Let $\eta$ be a smooth function on $[0,+\infty)$ with (i) $\eta(t)=1$ for $t\in [0,1]$, (ii) $\eta(t)=0$ for $t\geq 2$ and (ii) $\eta'\leq 0$. Let $\rho_R(x)=\eta^2(\frac{r(x)}{2C_2R})$. It is not hard to check that $\rho_R$ satisfies the requirements of the lemma with $$C_1=\max\{4C_2^2,\max(|\eta'|+|\eta'|^2+|\eta''|)\}.$$
\end{proof}
We are now ready to prove Theorem \ref{thm-main-noncom}.
\begin{proof}
We will proceed by contradiction. Let $F$ and $G$ be the same as in the proof of Theorem \ref{thm-main}. Then, by \eqref{eq-LG-1}, we have
\begin{equation}\label{eq-LG-2}
\begin{split}
LG\geq&\frac{2\beta}{n}\left(\Delta f\right)^2-(2k\beta+\beta')\|\nabla f\|^2+\psi'-2\vv<\nabla G,\nabla f>\\
=&\frac{2\beta}{n}\left(\|\nabla f\|^2-f_t\right)^2-(2k\beta+\beta')\|\nabla f\|^2+\psi'-2\vv<\nabla G,\nabla f>\\
=&\frac{2\beta}{n}\left(G+\psi+(1-\beta)\|\nabla f\|^2\right)^2-(2k\beta+\beta')\|\nabla f\|^2+\psi'-2\vv<\nabla G,\nabla f>\\
=&\frac{2\beta}{n}(G+\psi+X)^2-\frac{2k\beta+\beta'}{1-\beta}(\psi+X)+\frac{n(2k\beta+\beta')^2}{8\beta(1-\beta)^2}-2\vv<\nabla G,\nabla f>\\
=&\frac{2\beta}{n}G^2+\frac{4\beta}{n}G(\psi+X)+\frac{2\beta}n\left(\psi+X-\frac{n(2k\beta+\beta')}{4\beta(1-\beta)}\right)^2-2\vv<\nabla G,\nabla f>\\
%\geq&\frac{2\beta}{n}G^2+\frac{4\beta}{n}G(\psi+X)-2\vv<\nabla G,\nabla f>\\
\end{split}
\end{equation}
where $X=(1-\beta)\|\nabla f\|^2$ and we have substituted (B5) into the inequality. Moreover,
\begin{equation}\label{eq-LF-2}
\begin{split}
&LF\\
=&\lambda LG-(\ln\lambda)'F\\
\geq&\frac{2\beta}{n\lambda}F^2+\frac{4\beta}{n}F(\psi+X)+\frac{2\lambda\beta}n\left(\psi+X-\frac{n(2k\beta+\beta')}{4\beta(1-\beta)}\right)^2-(\ln\lambda)'F\\
&-2\vv<\nabla F,\nabla f>.\\
\end{split}
\end{equation}

Suppose that $F(p,t_0)>0$ for some $p\in M$ and $t_0\in (0,T]$. For each $R>1$, let $\rho_R$ be the cut-off function in Lemma \ref{lem-cutoff}. Let $Q_R=\rho_RF$. Then, by \eqref{eq-LF-2},
\begin{equation}\label{eq-QR-1}
\begin{split}
LQ_R=&\rho_RLF+F\Delta\rho_R+2\vv<\nabla \rho_R,\nabla F>\\
\geq&\frac{2\beta}{n\lambda\rho_R}Q_R^2+\frac{4\beta}{n}Q_R(\psi+X)+\frac{2\lambda\rho_R\beta}n\left(\psi+X-\frac{n(2k\beta+\beta')}{4\beta(1-\beta)}\right)^2-(\ln\lambda)'Q_R\\
&-2\vv<\rho_R\nabla F,\nabla f>+F\Delta\rho_R+2\vv<\nabla \rho_R,\nabla F>.\\
\end{split}
\end{equation}
By (B1'), (B2') and (B4'), there is a $\bar t_R\in (0,t_0)$ small enough such that
\begin{equation}\label{eq-bar-t}
\max_{x\in M}Q_R(x,\bar t_R)<F(p,t_0)=Q_R(p,t_0).
\end{equation}
Let $(x_R,t_R)$ be the maximum point of $Q_R$ in $M\times [\bar t_R,T]$. By \eqref{eq-bar-t}, we have $t_R>\bar t_R$ and $Q_R(x_R,t_R)>0$. Then,
\begin{equation}\label{eq-g-F}
\nabla F(x_R,t_R)=-F(x_R,t_R)\rho_R^{-1}\nabla\rho_R(x_R)
\end{equation}
and
\begin{equation}\label{eq-QR-2}
0\geq LQ_R(x_R,t_R).
\end{equation}
So, by \eqref{eq-QR-1}, and multiplying $\lambda(t_R)\rho_R(x_R)$ to \eqref{eq-QR-2}, at the point $(x_R,t_R)$, we have
\begin{equation}\label{eq-QR-m}
\begin{split}
0\geq&\frac{2\beta}{n}Q_R^2+\frac{4\beta}{n}Q_R\lambda\rho_R(\psi+X)+\frac{2\beta\lambda^2\rho_R^2}n\left(\psi+X-\frac{n(2k\beta+\beta')}{4\beta(1-\beta)}\right)^2-\lambda'\rho_RQ_R\\
&-2\lambda\vv<\rho_R^2\nabla F,\nabla f>+\lambda Q_R\Delta\rho_R+2\lambda\vv<\nabla \rho_R,\rho_R\nabla F>.\\
=&\frac{2\beta}{n}Q_R^2+\frac{4\beta}{n}Q_R\lambda\rho_R(\psi+X)+\frac{2\beta\lambda^2\rho_R^2}n\left(\psi+X-\frac{n(2k\beta+\beta')}{4\beta(1-\beta)}\right)^2-\lambda'\rho_RQ_R\\
&+2\lambda Q_R\vv<\nabla \rho_R,\nabla f>+\lambda Q_R\Delta\rho_R-2\lambda\|\nabla \rho_R\|^2F\\
\geq&\frac{2\beta}{n}Q_R^2+\frac{4\beta}{n}Q_R\lambda\rho_R(\psi+X)+\frac{2\beta\lambda^2\rho_R^2}n\left(\psi+X-\frac{n(2k\beta+\beta')}{4\beta(1-\beta)}\right)^2-\lambda'\rho_RQ_R\\
&-2C_1R^{-1}\left(\frac{\lambda}{1-\beta}\right)^{\frac12}Q_R(\lambda\rho_RX)^\frac12-3C_1R^{-1}\lambda Q_R\\
\geq&\frac{2\beta}{n}Q_R^2+\frac{4\beta}{n}Q_R\lambda\rho_R(\psi+X)+\frac{2\beta\lambda^2\rho_R^2}n\left(\psi+X-\frac{n(2k\beta+\beta')}{4\beta(1-\beta)}\right)^2-\lambda'\rho_RQ_R\\
&-2C_3R^{-1}Q_R(\lambda\rho_RX)^\frac12-C_3R^{-1}Q_R\\
\geq&\frac{2\beta}{n}Q_R^2+\left(\frac{4\beta}{n}-C_3R^{-1}\right)Q_R\lambda\rho_R(\psi+X)+\frac{2\beta\lambda^2\rho_R^2}n\left(\psi+X-\frac{n(2k\beta+\beta')}{4\beta(1-\beta)}\right)^2\\
&-\lambda'\rho_RQ_R-2C_3R^{-1}Q_R.\\
\end{split}
\end{equation}
where we have used Lemma \ref{lem-cutoff}, (B$2\frac12$), (B4') and the fact $$2Q_R(\lambda\rho_RX)^\frac{1}2\leq Q_R+Q_R(\lambda\rho_R X).$$
Here $C_3=C_1\max\left\{\sup_{(0,T]} \sqrt\frac{\lambda}{1-\beta}, 3\max_{[0,T]}\lambda\right\}$.

Next, we divide the proof into three cases to draw a contradiction.
\begin{enumerate}
\item There is a sequence $R_i\to+\infty$ as $i\to\infty$, such that $\rho_{R_i}(x_{R_i})\to 0$ as $i\to \infty$. Then, by \eqref{eq-QR-m}, we have
\begin{equation}\label{eq-QR-3}
0<F(p,t_0)\leq Q_{R_i}(x_{R_i},t_{R_i})\leq \frac{n}{2\beta(t_{R_i})}\left(\lambda'(t_{R_i})\rho_{R_i}(x_{R_i})+2C_3R_i^{-1}\right)
\end{equation}
when $i$ is sufficiently large. By (B1'), (B$2\frac12$) and (B3'), we know that $\min_{[0,T]}\beta(t)>0$ and $\lambda'$ is bounded from above on $(0,T]$. So, taking $i\to\infty$ in \eqref{eq-QR-3} gives us  a contradiction.
\item There is a sequence $R_i\to+\infty$ as $i\to\infty$, such that $\lambda'(t_{R_i})\to 0$ as $i\to\infty$. Then, similarly as in case (1), by \eqref{eq-QR-3}, we can draw a contradiction.
\item If there is a positive constant $\e_0>0$ such that $\rho_{R}(x_R)\geq \e_0$ and $\lambda'(t_{R})\geq \e_0$ when $R$ is sufficiently large. Then, by \eqref{eq-QR-m},
at the point $(x_R,t_R)$,
\begin{equation}
\begin{split}
0\geq&\frac{2\beta}{n}Q_R^2+\left(\frac{4\beta}{n}-C_3R^{-1}\right)Q_R\lambda\rho_R(\psi+X)+\frac{2\beta\lambda^2\rho_R^2}n\left(\psi+X-\frac{n(2k\beta+\beta')}{4\beta(1-\beta)}\right)^2\\
&-(1+C_4R^{-1})\lambda'\rho_RQ_R\\
\geq&\frac{2\beta}{n}Q_R^2+\left(\frac{4\beta}{n}-C_3R^{-1}\right)Q_R\lambda\rho_R\left(\psi+X-\frac{n(2k\beta+\beta')}{4\beta(1-\beta)}\right)\\
&+\frac{2\beta\lambda^2\rho_R^2}n\left(\psi+X-\frac{n(2k\beta+\beta')}{4\beta(1-\beta)}\right)^2+(1-C_5R^{-1})\frac{2k\beta+\beta'}{1-\beta}\lambda\rho_RQ_R\\
&-(1+C_4R^{-1})\lambda'\rho_RQ_R\\
\geq&(1-C_5R^{-1})\lambda\left(\frac{2k\beta+\beta'}{1-\beta}-\frac{1+C_4R^{-1}}{1-C_5R^{-1}}(\ln\lambda)'\right)\rho_RQ_R.\\
\end{split}
\end{equation}
where $C_4=\frac{2C_3}{\e_0^2}$ and  $C_5=\frac{nC_3}{4\min_{[0,T]}\beta}$. Moreover, by (B2') and (B3), when $R$ is sufficiently large,
\begin{equation}
\frac{2k\beta+\beta'}{1-\beta}-\frac{1+C_4R^{-1}}{1-C_5R^{-1}}(\ln\lambda)'>0.
\end{equation}
So,
\begin{equation}
0<F(p,t_0)\leq Q_R(x_R,t_R)\leq 0
\end{equation}
when $R$ is sufficiently large. This is a contradiction.
\end{enumerate}
This completes the proof of the theorem.
\end{proof}

We next come to prove Corollary \ref{cor-main-noncom}.
\begin{proof}[Proof of Corollary \ref{cor-main-noncom}] Note that
\begin{equation}
\beta(t)=1-\frac{2k\int_0^tb(s)e^{2ks}ds}{b(t)e^{2kt}}\geq 1-2kt,
\end{equation}
by (C1), and it has been shown in the proof of Corollary \ref{cor-main} that $0<\beta(t)<1$ for $t\in (0,T]$. So (B1') is satisfied.

Let $\lambda=b^{1-\delta}$. Then, (B2') is satisfied by (C1).
By \eqref{eq-beta-b},
\begin{equation}
\frac{2k\beta+\beta'}{1-\beta}-(1+\delta)(\ln\lambda)'=\delta^2(\ln b(t))'>0
\end{equation}
for $t\in (0,T]$. So, (B3') is also satisfied. Moreover,
\begin{equation}
\begin{split}
\frac{\lambda}{1-\beta}=&\frac{b^{2-\delta}e^{2kt}}{2k\int_0^tb(s)e^{2ks}ds}\\
\leq& \frac1{2k}e^{2kT}\frac{b^{2-\delta}}{\int_0^tb(s)ds}\\
=&\frac{2-\delta}{2k}e^{2kT}\frac{\int_0^tb^{1-\delta}b'(s)ds}{\int_0^tb(s)ds}
\end{split}
\end{equation}
is bounded from above by (C3), and by \eqref{eq-beta-b},
\begin{equation}
\beta'\leq (1-\beta)(\ln b)'=\frac{2kb'\int_0^tb(s)e^{2ks}ds}{b^2e^{2kt}}\leq \frac{2kb'\int_0^tb(s)ds}{b^2}
\end{equation}
is bounded from above by (C4). So (B$2\frac12$) is satisfied.

Finally, by the expression \eqref{eq-psi} and direct computation, (B4') and (B5) is clearly satisfied. So,  by Theorem \ref{thm-main-noncom}, we complete the proof of the Corollary.
\end{proof}
Finally, we come to prove Theorem \ref{thm-im-LYD}.
\begin{proof}[Proof of Theorem \ref{thm-im-LYD}]
For each $\beta_0\in (0,1)$ and $t_0\geq \frac{1-\beta_0}{k\beta_0}$, let $\theta_0=\frac{1-\beta_0}{k\beta_0t_0}\in (0,1)$. Then $\beta_0=\frac{1}{1+\theta_0kt_0}$. In \eqref{eq-Qian-1}, let $\beta(t)=\frac1{1+\theta_0 kt}$ and $t=t_0$, we have
\begin{equation}
\begin{split}
&\beta_0\|\nabla f\|^2-f_t\\
\leq&\frac{(2-\theta_0)^2n\beta_0}{16\theta_0(1-\theta_0)t_0}+\frac{nk^2\beta_0\theta_0 t_0}{4}+\frac{nk\beta_0}{2}\\
=&\frac{n(2k\beta_0t_0+\beta_0-1)^2\beta_0}{16(1-\beta_0)(k\beta_0t_0+\beta_0-1)t_0}+\frac{nk(\beta_0+1)}{4}\\
=&\frac{n(1-\beta_0)}{16k( t_0-\frac{1-\beta_0}{k\beta_0})t_0}+\frac{nk}{4(1-\beta_0)}.
\end{split}
\end{equation}
This completes the proof of the theorem.
\end{proof}

\end{document}